\theoremstyle{plain}
\newtheorem*{lemma*}{Lemma}
\newtheorem*{theorem*}{Theorem}
\newtheorem{theorem}[subsection]{Theorem}
\newtheorem*{proposition*}{Proposition}
\newtheorem{proposition}[subsection]{Proposition}
\newtheorem*{corollary*}{Corollary}
\newtheorem{corollary}[subsection]{Corollary}
\theoremstyle{definition}
\newtheorem*{definition*}{Definition}
\newtheorem{definition}[subsection]{Definition}
\newtheorem*{example*}{Example}
\newtheorem{example}[subsection]{Example}
\theoremstyle{remark}
\newtheorem*{remark*}{Remark}
\newtheorem{remark}[subsection]{Remark}
 \newcommand{\C}{{\mathbb C}}
\newcommand{\R}{{\mathbb R}}
\newcommand{\Z}{{\mathbb Z}}
\renewcommand{\ll}{{\langle}}
\newcommand{\rr}{{\rangle}}
\newcommand{\reg}{{\mathrm{reg}}}
\newcommand{\CR}{{\mathcal{R}}}
\newcommand{\s}{{\bf{s}}}
\begin{document}

\title{ A remark on  the  convolution with the Box Spline}

 \author{ Mich{\`e}le Vergne}
\date{}
\maketitle

\begin{abstract}
The semi-discrete convolution with the Box Spline is an important tool in approximation theory.
We give a formula for the difference between semi-discrete convolution and convolution with the Box Spline. This formula involves multiple Bernoulli polynomials .
\end{abstract}

{\bf Key words}: polynomial interpolation, box splines, zonotopes, hyperplane arrangements, Bernoulli polynomials.

\section{Box Splines and semi-discrete convolution}
    Let $V$ be a $n$-dimensional real vector space equipped with a lattice $\Lambda$. If we choose a basis of the lattice $\Lambda$, then we may identify $V$ with $\R^n$ and $\Lambda$ with $\Z^n$.   We choose here the Lebesgue measure $dv$ associated to the lattice $\Lambda$.

    Let $X=[a_1,a_2,\ldots, a_N]$ be   a sequence (a multiset)  of  $N$ non zero vectors in $\Lambda$.

    The {\bf zonotope} $Z(X)$
associated with $X$ is the polytope
  $$
  Z(X):=\{\sum_{i=1}^N  t_i a_i\,;\, t_i\in [0,1]\}.
  $$
In other words, $Z(X)$ is the Minkowski sum of the segments $[0,a_i]$
over all  vectors $a_i\in X$.

We denote by $\C[V]$ the space of (complex valued) polynomial functions on $V$.

    Recall that the Box Spline $B(X)$ is the distribution on $V$ such that, for a test function
    $test$ on $V$, we have the equality

    \begin{equation}\label{eq:box}
 \ll B(X), test\rr =\int_{t_1=0}^1\cdots \int_{t_N=0}^1 test(\sum_{i=1}^N t_i a_i) dt_1\cdots dt_N.
\end{equation}

We also note  $\ll B(X), test\rr=\int_V B(X)(v) test(v)$.

The distribution  $B(X)$ is a probability measure supported  on the zonotope $Z(X)$. If $X$ is empty, then $B(X)$ is the $\delta$ distribution on $V$.  For the basic properties of the Box Spline, we refer to \cite{deboor}  (or  \cite{dp1}, chapter 16) .

If $D$ is any distribution on $V$, the convolution $B(X)* D$ is well defined and is again a distribution on $V$.
If $D=f(v)dv$ is a smooth density, then $B(X)*D=F(v)dv$ is a smooth density with $$F(v)=\int_{t_1=0}^1\cdots \int_{t_N=0}^1 f(v-\sum_{i=1}^N t_i a_i) dt_1\cdots dt_N. $$

If $X$ generates $V$, the zonotope is a full dimensional polytope, and $B(X)$  is given by integration against a locally $L^1$-function. Let us describe more precisely where this function is smooth.

We continue to assume that $X$ generates $V$.
An hyperplane of $V$ generated by a subsequence of elements of $X$ is
called  admissible.
An element of $V$ is called (affine) regular, if no translate $v+\lambda$ of $v$ by  any $\lambda$  in the lattice $\Lambda$ lies in an admissible hyperplane. We denote by $V_{\rm reg,aff}$ the open subset of  $V$ consisting of affine regular elements: the set  $V_{\rm reg,aff}$  is the complement of the union of all the translates by $\Lambda$ of admissible hyperplanes.
A connected component $\tau$ of the set of regular elements will be called a (affine) tope (see Figure \ref{topea2}).

\begin{figure}
\begin{center}
  \includegraphics[width=50mm]{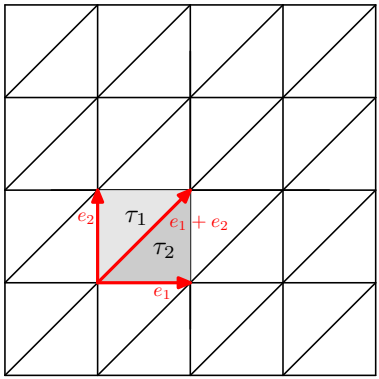}\\
  \caption{Affine topes for $X=[e_1,e_2,e_1+e_2]$}\label{topea2}
\end{center}
\end{figure}

The choice of the Lebesgue measure $dv$ on $V$ allows us to identify distributions and generalized functions: if $F$ is a generalized function, $Fdv$ is a distribution. If the distribution $Fdv$ is given by
  $\ll F dv,test\rr=\int_V f(v) test(v) dv$, with $f(v)$ locally $L^1$, we say that $F$ is locally $L^1$, and we use the same notation for $F$ and the locally $L^1$ function $f$.

A generalized function $b$  on $V$ will be called piecewise polynomial (relative to $X,\Lambda$) if:

$\bullet$ the function $b$ is locally $L^1$,

  $\bullet$ on each tope $\tau$, there exists a polynomial function $b(\tau)$ on $V$ such that the restriction of $b$ to $\tau$ coincides with the restriction of  the polynomial $b(\tau)$ to $\tau$.

If $F$ is a piecewise polynomial function,  we will say that the distribution $Fdv$ is piecewise  polynomial.

      If $X$ generates $V$, the Box Spline  $B(X)$  is a piecewise  polynomial (relative to $(X,\Lambda)$) distribution supported on the zonotope $Z(X)$.

    Let $f$ be a  smooth  function on $V$.
     Then there is  two distributions naturally associated to $X,\Lambda,f$:

      $\bullet$ the piecewise  polynomial distribution $B(X)*_d f$:

     on a test function $test$,
    $$\ll B(X)*_d f,test \rr=\sum_{\lambda\in \Lambda}f(\lambda) \int_{t_1=0}^1\cdots \int_{t_N=0}^1 test(\lambda+\sum_{i=1}^N t_i a_i) dt_1\cdots dt_N.$$

    $\bullet$ the smooth density $B(X)*_c f$:

     on a test function $test$,
    $$\ll B(X)*_c f ,test\rr=\int_{V}f(v) \int_{t_1=0}^1\cdots \int_{t_N=0}^1 test(v+\sum_{i=1}^N t_i a_i) dt_1\cdots dt_N dv.$$

The notations $*_d$ and $*_c$ means discrete, versus continuous.
$B(X)*_d f$ is the convolution of $B(X)$ with the discrete measure $\sum_\lambda f(\lambda)\delta_\lambda$, while
           $B(X)*_c f$  is the usual convolution of $B(X)$ with the smooth density $ f(v)dv$. The subscript $*_c$ is just for emphasis. The operation $*_d$ is denoted $*'$ in \cite{deboor}, \cite{dp1} and is called semi-discrete convolution.

Our aim is to write an explicit formula for the difference $B(X)*_df-B(X)*_cf$.

\bigskip

 We also associate to $a\in X$ three  operators:

 $\bullet$ the partial differential operator
 $$(\partial_{a}f)(v)=\frac{d}{d\epsilon} f(v+\epsilon a),$$

$\bullet$ the difference operator
$$(\nabla_a f)(v)=f(v)-f(v-a),$$

$\bullet$ the integral  operator

$$(I_a f)(v)=\int_0^1 f(v-t a) dt.$$

The operator $I_a$ is the convolution $B([a])*_cf$  with the Box Spline associated to  the sequence with a single element  $a$.

    These three operators respects the space of polynomial functions $\C[V]$ on $V$.
    The Taylor series formula implies that, on the space $\C[V]$, the operator $I_a$ is the invertible operator given by
    $$I_a=\frac{1-e^{-\partial_a}}{\partial_a}=\sum_{j=0}^{\infty} (-1)^j\frac{1}{(j+1)!} \partial_a^j.$$

In particular,  if  $f\in \C[V]$  is a polynomial
\begin{equation}
B(X)*_c f=\left((\prod_{a\in X} \frac{1-e^{-\partial_a}}{\partial_a}) f\right)dv.
\end{equation}

If $I,J$ are subsequences of $X$, we define the operators  $\partial_I=\prod_{a\in I}\partial_a$ and $\nabla_J=\prod_{b\in J}\nabla_b$. They are defined on distributions.

Recall that $\partial_Y B(X)=\nabla_Y B(X\setminus Y)$, if $Y$ is a subsequence of $X$.

A subsequence $Y$ of $X$ will be called long if the sequence $X\setminus Y$ do not generate the vector space $V$.
A  long subsequence $Y$, minimal along the long subsequences, is also called a cocircuit: then $Y=X\setminus H$ where $H$ is an admissible hyperplane.

 In our formula, when $f$ is a polynomial,  $B(X)*_d f-B(X)*_cf$ is naturally expressed in function of  the derivatives $\partial_Yf$ with respect to long subsequences $Y$.

\section{Piecewise smooth distributions}

Our aim  is to write an explicit formula for the difference of the two distributions $B(X)*_d f$ and $B(X)*_c f$. As the first one is a piecewise  polynomial distribution, the second  a smooth density, we will need to introduce an intermediate space of distributions.
We will use "piecewise  smooth distributions".  Let us give a definition.

We continue to assume that $X$ generates $V$.

\begin{definition}
A generalized function  $b$ on $V$ will be called piecewise   smooth (relative to $X,\Lambda$) if:

$\bullet$ the generalized function $b$ is  locally $L^1$,

  $\bullet$ on each tope $\tau$, there exists a smooth function $b(\tau)$ on the full space $V$ such that the restriction of $b$ to $\tau$ coincides with the restriction of  the smooth function $b(\tau)$ to $\tau$.

  \end{definition}

 In this definition, given a tope $\tau$, the function $b$ restricted to $\tau$ (as well as all its derivatives) extends continuously to the closure of $\tau$. However, these extensions do not always coincide  on intersections of the closures of topes.

If $b$ is piecewise  smooth, we then say that the distribution
 $B:=b(v)dv$ (given
  by integration against the locally $L^1$ function $b$) is piecewise  smooth.

It is clear that if we multiply a piecewise  polynomial distribution $B$ by a smooth function, we obtain a piecewise  smooth distribution.
Remark that the space of piecewise smooth distributions is stable by the operators $\nabla_a$, and by convolution with Box Splines $B(Y)$ ($Y$ any subsequence of $X$). However, it is not stable under operators $\partial_a$. For example, $\partial_X B(X)=\nabla_X B(\emptyset)$ is a linear combination of $\delta$ distributions.

%
%
%
%
%
%
%
%
%
%
%
%
%
%
%
%
%
%
%
%
%
%
%
%
%
%
%
%
%
%
%
%
%
%
%
%
%
%
%

\section{Multiple Bernoulli periodic polynomials}

Let $U$ be the dual vector space to $V$ and $\Gamma\subset U$  be the dual lattice to $\Lambda$.

 If $Y$ is a subsequence of $X$, we define
$$U_{\reg}(Y)=\{u\in U \, ; \ll a,u \rr \neq
0,\;\mbox{for all} \;a\in Y\}$$
and
$$\Gamma_{\reg}(Y)= \Gamma\cap U_{\reg}(Y).$$

Consider the  periodic function on $V$ given by the (oscillatory) sum
\begin{equation} {W}(X)(v)=\sum_{\gamma \in \Gamma_{\reg}(X)}  \frac{e^{2i\pi \ll v ,\gamma \rr}}{
\prod_{a \in X} 2i\pi \ll a,\gamma \rr}.
\end{equation}

This is well defined as a generalized function on $V$.
In the sense of generalized functions,  we have

\begin{equation}\label{primitive}
\partial_X W(X)(v)=\sum_{\gamma\in \Gamma_{\reg}(X)} e^{2i\pi\ll \gamma,v\rr}.
\end{equation}

We will use this equation to construct ``primitives" of parts of the Poisson formula.

We will call the series $W(X)$ a  multiple Bernoulli series.
 Multiple Bernoulli series have
been  extensively studied by A. Szenes \cite{sze1}.
 They are natural  generalizations of Bernoulli series:
for $\Lambda=\Z \omega$ and $X_k:=[\omega,\omega,\ldots, \omega]$,
where  $\omega$ is repeated $k$ times with $k>0$, the series
$$W(X_k)(t\omega)=\sum_{n\neq 0} \frac{e^{2i\pi nt}}{
(2i\pi n)^k}$$ is equal to $-\frac{1}{k!} B(k,t-[t])$ where
$B(k,t)$ denotes the $k^{\text{th}}$ Bernoulli polynomial in
variable $t$.
In particular, for $k=1$, we have
$W(X_1)(t\omega)= \frac{1}{2}-t+[t]$ (see
Figure \ref{berno}).

\begin{figure}
\begin{center}
  \includegraphics[width=93mm]{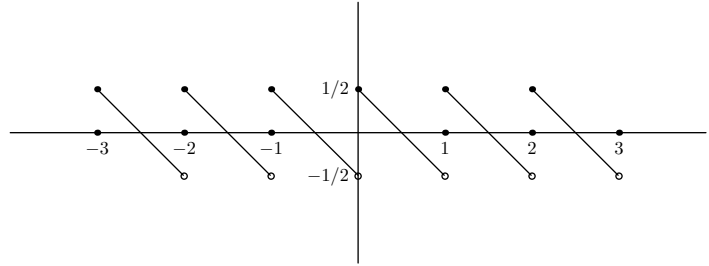}\\
  \caption{Graph of $W(X_1)(t\omega)= \frac{1}{2}-t+[t]$}\label{berno}
\end{center}
\end{figure}

We recall the following proposition \cite{sze1} (see also \cite{brver2}, \cite{BV}).

\begin{proposition}
If $X$ generates $V$, the generalized function $W(X)$ is  piecewise  polynomial (relative  to $(X,\Lambda)$).
\end{proposition}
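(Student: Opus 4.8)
The plan is to prove that $W(X)$ is locally $L^1$ and piecewise polynomial by induction on the cardinality $N$ of $X$, using the "primitive" relation \eqref{primitive} to reduce the size of $X$. The base case is $X$ a basis of $V$ (or, after identifying $V$ with $\R^n$, $X = [e_1,\dots,e_n]$): then $W(X)(v)$ factors as a product of one-variable Bernoulli series $\prod_i \left(\tfrac12 - t_i + [t_i]\right)$ in the coordinates, which is manifestly locally $L^1$ and agrees with a polynomial on each translate of the open unit cube, i.e.\ on each tope. More generally, if $X$ is a basis one uses the product formula over that basis.

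For the inductive step, pick any $a \in X$ which is not needed for $X$ to generate $V$ if such exists; otherwise $X$ is "in general position enough" and one argues directly. The cleaner route: fix $a\in X$ and write $X = [a] \cup X'$. Then $\partial_a W(X)(v) = \sum_{\gamma \in \Gamma_{\reg}(X)} \frac{e^{2i\pi\ll v,\gamma\rr}}{\prod_{b\in X'} 2i\pi\ll b,\gamma\rr}$, which differs from $W(X')(v)$ only by the terms with $\ll a,\gamma\rr = 0$ (a lower-rank contribution that is, after quotienting by the line spanned, again a multiple Bernoulli series in one fewer dimension, hence piecewise polynomial by the inductive hypothesis on the quotient). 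So $\partial_a W(X)$ is piecewise polynomial relative to $(X',\Lambda)$, hence certainly relative to $(X,\Lambda)$ since refining $X$ only refines the tope decomposition. The key observation is then that $W(X)$ is the unique (up to the kernel of $\partial_a$) "primitive" of this piecewise polynomial function that is still periodic, locally $L^1$, and has the right Fourier expansion; integrating a piecewise polynomial function in the direction $a$ produces a piecewise polynomial function provided one controls the jumps across admissible hyperplanes, and periodicity pins down the constants of integration. One checks that the only hyperplanes across which $W(X)$ can be non-smooth are the admissible ones, because away from translates of admissible hyperplanes all the defining data vary smoothly and the oscillatory sum, while not absolutely convergent, defines a generalized function whose singular support is contained in the union of translates of admissible hyperplanes (this is where one invokes the standard decay/summation-by-parts estimates for such series, as in Szenes).

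The main obstacle is making rigorous the passage from "$\partial_a W(X)$ is piecewise polynomial" to "$W(X)$ is piecewise polynomial", i.e.\ controlling the constants of integration and showing the primitive does not acquire non-polynomial pieces or worse-than-$L^1$ singularities. Concretely one must show: (i) $W(X)$ is locally $L^1$ — this follows from a dimension/degree count on the order of the pole $\prod_{a\in X}\tfrac1{\ll a,\gamma\rr}$ relative to the number of summation variables, exactly as in the one-variable case where $\sum 1/(2i\pi n)^k$ converges to an $L^1$ function for $k\ge 1$; and (ii) on each tope $\tau$ the restriction extends to a global polynomial — here one integrates the inductively-known polynomial $b'(\tau')$ in the $a$-direction across each tope and uses the periodicity of $W(X)$ together with the jump formula $W(X)|_{\tau_1} - W(X)|_{\tau_2}$ across a wall (which by \eqref{primitive} and the inductive description of the jump of $\partial_a W(X)$ is itself polynomial) to fix the integration constant consistently. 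I would cite \cite{sze1} (and \cite{brver2}, \cite{BV}) for the analytic convergence facts and present the induction as the new organizing content.
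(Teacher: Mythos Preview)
Your approach differs from the paper's. The paper does not induct on $|X|$ via the primitive relation; it instead invokes a \emph{partial fraction decomposition} of the rational function $z\mapsto \frac{1}{\prod_{a\in X}\ll a,z\rr}$ as a finite sum of terms of the form $\frac{1}{\prod_{i=1}^n \ll a_{j_i},z\rr^{n_i}}$, each indexed by a basis $\{a_{j_1},\dots,a_{j_n}\}$ of $V$ extracted from $X$. Summed over the lattice, each such term is (after handling the finite-index discrepancy between $\Lambda$ and the sublattice generated by the chosen basis, and an inclusion--exclusion between $\Gamma_{\reg}(X)$ and $\Gamma_{\reg}$ of the basis) a product of one-variable periodic Bernoulli polynomials in the coordinates dual to that basis, hence manifestly piecewise polynomial. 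This is a direct reduction to dimension one with no integration step; the paper cites \cite{sze1}, \cite{brver2}, \cite{BV} for the details and for Szenes' residue formula making the decomposition explicit.

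The step you yourself flag as the main obstacle is a genuine gap, not a routine detail. Piecewise polynomiality of $\partial_a W(X)$ does not by itself force piecewise polynomiality of $W(X)$: the kernel of $\partial_a$ on $\Lambda$-periodic distributions consists of all distributions constant along $\R a$, so one could in principle add an arbitrary periodic function of the transverse variables without disturbing $\partial_a$. You are right that the vanishing of the Fourier coefficients of $W(X)$ on $\{\gamma:\ll a,\gamma\rr=0\}$ singles out a unique primitive, but showing that \emph{this particular} primitive is piecewise polynomial on the $(X,\Lambda)$-tope decomposition still requires an explicit integration-and-matching argument across walls, and the topes are not product-shaped in the $a$-direction. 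This can be carried out, but it is at least as much work as the partial-fraction route and is not a statement you can simply import from \cite{sze1}. The paper's approach buys a clean reduction with no antiderivative step; your inductive scheme would buy a recursive structure potentially useful for tracking degrees or wall-crossing, but as a proof of bare piecewise polynomiality it is the harder road.
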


Thus we will also call $W(X)$ a multiple periodic Bernoulli polynomial.

\bigskip

The above proposition is proved  by reduction to the one variable case.
Indeed, the function $\frac{1}{\prod_{a\in X}\ll a,z\rr}$ can be decomposed in a sum of functions
$\frac{1}{\prod_{i=1}^n\ll a_{j_i},z\rr^{n_i}}$
with respect to a basis  $a_{j_i}$ of $V$ extracted from $X$. This reduces the computation to the one dimensional case.
A. Szenes \cite{sze1} gave an efficient multidimensional explicit residue formula to compute $W(X)$.

\begin{example}\label{bernoulli2}
Let $V=\R e_1\oplus \R e_2$ with lattice $\Lambda=\Z e_1\oplus \Z
e_2$. Let $X=[e_1,e_2,e_1+e_2]$. We write $v\in V$ as $v=v_1
e_1+v_2 e_2$.

We compute the generalized function

 \[\begin{array}{ll}
W(v_1,v_2)&=\displaystyle \sum_{n_1\neq 0, n_2\neq 0, n_1+n_2 \neq
0} \frac{e^{2i\pi(n_1v_1+n_2v_2)}}{(2i\pi n_1)(2i\pi
n_2)(2i\pi(n_1+n_2))}.
\end{array}\]

Then  $W$ is a locally $L^1$-function on $V$, periodic with respect to $\Z e_1+\Z e_2$.
To describe it, it is  sufficient
to write the formulae of $W(v_1,v_2)$ for $0<v_1<1$ and $0<v_2<1$, which we compute (for example using the relation $\frac{1}{n_1 n_2 (n_1+n_2)}=\frac{1}{n_1  (n_1+n_2)^2}+\frac{1}{ n_2 (n_1+n_2)^2}$)
 as :

\[W(v_1,v_2)=\left\{\begin{array}{cl}
   -\frac{1}{6}(1+v_1-2v_2)(v_1-1+v_2)(2v_1-v_2) , & v_1< v_2\\
   -\frac{1}{6}(v_1-2v_2)(v_1-1+v_2)(2v_1-1-v_2),   & v_1> v_2.
       \end{array}\right.\]

       Thus we see that $W$ is a piecewise  polynomial function.

\end{example}

\begin{remark}
If $X$ do not generate $V$, $W(X)$ is not locally $L^1$: take $X=\emptyset$, then, by Poisson formula, $W(\emptyset)$ is the delta distribution of the lattice $\Lambda$.
       \end{remark}

\begin{definition}
A subspace  $\s$ of $V$ generated by a subsequence of elements of $X$ is
called  $X$-admissible.
We denote by $\CR$ the set of  $X$-admissible subspaces of $V$.
We denote by $\CR'$ the set of proper $X$-admissible subspaces.
\end{definition}

The spaces $\s=V$ and $\s=\{0\}$ are among the admissible subspaces of $V$.
The set $\CR'$
consists of all admissible subspaces of $V$, except $\s=V$.

\bigskip

Let $\s$ be an admissible subspace of $V$.
Let us consider  the list $X\setminus \s$, where we have removed from the list $X$ all elements belonging to $\s$.
The projection of the list $X\setminus \s$ on $V/\s$
will be denoted by $X/\s$. The image of the lattice $\Lambda$ in $V/\s$ is a lattice in $V/\s$.
If $X$ generates $V$, $X/\s$ generates $V/\s$.
 Using the projection $V\to V/\s$, we identify    the piecewise  polynomial function $W(X/\s)$ on $V/\s$  to  a piecewise  polynomial function on $V$ constant along the affine spaces $v+\s$.

Define $U_{\rm reg}(X/\s)=U_{\reg}(X\setminus \s)\cap \s^{\perp}$.
Thus $\Gamma_{\rm reg}(X/\s):=\Gamma\cap U_{\rm reg}(X/\s)$ is    the set of elements $\gamma\in \Gamma$  such that:
$$\ll \gamma,s\rr=0 \,\,\,\,{\rm  for\, all\,}\, s\in \s; \hspace{0.5cm}
\ll \gamma,a\rr\neq 0 \,\, \,{\rm  for\, all}\, a\in X\setminus \s.$$

Identifying the dual space to $V/\s$ to the space $\s^{\perp}$, we see that
the function $W(X/\s)$ is  the  function on $V$ given by the series (convergent in the sense of generalized functions)
$$W(X/\s)(v):=\sum_{\gamma\in \Gamma_{\rm reg}(X/\s)} \frac{e^{2i\pi\ll v,\gamma \rr}}{\prod_{a\in X\setminus \s} 2i\pi \ll a,\gamma\rr}.$$
This function is periodic with respect to the lattice $\Lambda$, piecewise polynomial on $V$ (relative to $X$,$\Lambda$) and constant along $v+\s$.

If $\s=V$, the function  $W(X/\s)$  is identically equal to $1$, while if $\s=\{0\}$, we obtain back our series $W(X)$.

\section{A Formula}

Let us now state our formula.
We assume, as before,  that $X$ generates $V$.

For each  $\s\in \CR$, we consider all possible decompositions of
the list $X\setminus \s$ in disjoint lists $I\sqcup J$.
If $f$ is a smooth function, the function
$$F(v)=W(X/\s)(v)(\partial_I \nabla_Jf)(v)$$ is a piecewise  smooth  function on $V$.
If $Y$ is a subsequence of $X$, the convolution
 $B(Y)* F dv$ is well defined and the result is a
 piecewise smooth distribution on $V$ that we denote by
 $B(Y)*_c (W(X/\s)\partial_I \nabla_Jf).$

\begin{theorem}\label{theo:joli}
Let $f$ be a smooth function on $V$.
We have  $$B(X)*_d f -B(X)*_cf=\sum_{\s\in \mathcal R'}\sum_{I\subset X\setminus \s} (-1)^{|I|}
   B((X\cap \s)\sqcup I)*_c(W(X/\s)\partial_I \nabla_Jf).$$
In this formula $J$ is the complement of the sequence  $I$ in $X\setminus \s$.

This equality holds in the space of piecewise (relative to $(X,\Lambda)$)  smooth distributions on $V$, relative to $(X,\Lambda)$.

\end{theorem}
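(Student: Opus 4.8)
The plan is to reduce the identity to the Poisson summation formula and then integrate it against the kernel of $B(X)$. The starting point is the observation that $B(X)*_d f - B(X)*_c f$, paired against a test function, is
$$\sum_{\lambda\in\Lambda} f(\lambda)\,\langle \tau_\lambda B(X), test\rangle - \int_V f(v)\,\langle \tau_v B(X), test\rangle\, dv,$$
where $\tau_w$ denotes translation. Introducing the smooth density $g = f\,dv$ and writing $\mu = \sum_\lambda \delta_\lambda - dv$ for the "Poisson defect" distribution, we have $B(X)*_d f - B(X)*_c f = B(X)*(f\mu)$. By the Poisson formula, $\sum_{\lambda\in\Lambda}\delta_\lambda = \sum_{\gamma\in\Gamma} e^{2i\pi\langle\cdot,\gamma\rangle}\,dv$, so $f\mu = \big(f\cdot\sum_{\gamma\neq 0} e^{2i\pi\langle v,\gamma\rangle}\big)\,dv$. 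The whole difference is thus governed by the frequencies $\gamma\in\Gamma\setminus\{0\}$. First I would stratify $\Gamma\setminus\{0\}$ by which of the linear forms $\langle a,\cdot\rangle$, $a\in X$, vanish on $\gamma$: each nonzero $\gamma$ lies in $\Gamma_{\reg}(X\setminus\s)\cap\s^\perp = \Gamma_{\reg}(X/\s)$ for a unique admissible $\s\in\CR'$ (namely $\s$ is spanned by those $a\in X$ with $\langle a,\gamma\rangle = 0$).

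The second step is to apply $B(X)*_c(-)$ to each graded piece. On $\C[V]\cdot\{\text{exponentials}\}$ we know $B(X)*_c$ acts as $\prod_{a\in X}\frac{1-e^{-\partial_a}}{\partial_a}$; I want to split this product according to whether $a\in X\cap\s$ or $a\in X\setminus\s$. For $a\in X\setminus\s$ one has $\langle a,\gamma\rangle\neq 0$, so on the function $f(v)e^{2i\pi\langle v,\gamma\rangle}$ the operator $\partial_a$ has an "invertible part" coming from the exponential; the idea is to write $\frac{1-e^{-\partial_a}}{\partial_a} = (1-e^{-\partial_a})\cdot\partial_a^{-1}$ and move the $(1-e^{-\partial_a})$ factors — which are the difference operators $\nabla_a$ acting after translation — onto $f$, while the $\partial_a^{-1}$ factors combine with the exponential to rebuild the Bernoulli denominators $\prod_{a\in X\setminus\s} 2i\pi\langle a,\gamma\rangle$ in the definition of $W(X/\s)$. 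Expanding $\prod_{a\in X\setminus\s}(1-e^{-\partial_a})$ over subsets $I\subset X\setminus\s$ produces the sign $(-1)^{|I|}$ and the operator $\partial_I\nabla_J$ (the $e^{-\partial_a}$'s for $a\in I$ being absorbed as genuine translations, hence $\partial_I$ after summing the geometric-type series, with the $a\in J$ giving honest $\nabla_a$'s). What survives on the $X\cap\s$ side is exactly the operator $\prod_{a\in X\cap\s}\frac{1-e^{-\partial_a}}{\partial_a} = B((X\cap\s)\sqcup I)*_c$ minus the $B([a])$'s already used — i.e. convolution by $B((X\cap\s)\sqcup I)$, with the $I$-part of the Box Spline appearing because the $\partial_I^{-1}$ acting on the exponential is precisely $\prod_{a\in I}\frac{1}{2i\pi\langle a,\gamma\rangle}$ while the corresponding $(1-e^{-\partial_a})$ was reassigned to $f$; reconciling these two bookkeepings is where the constant $B((X\cap\s)\sqcup I)$ in the statement comes from. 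Summing over $\gamma$ within each stratum reconstitutes $W(X/\s)(v)$ (as a generalized function, via equation \eqref{primitive}), giving the claimed term $B((X\cap\s)\sqcup I)*_c(W(X/\s)\,\partial_I\nabla_J f)$.

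The third step is a convergence/regularity check: one must justify that these manipulations, formal at the level of exponential sums, are legitimate in the space of piecewise smooth distributions. Here I would invoke that $W(X/\s)$ is piecewise polynomial (the recalled proposition of Szenes), that multiplying it by the smooth function $\partial_I\nabla_J f$ yields a piecewise smooth function, that convolving with $B((X\cap\s)\sqcup I)$ keeps us piecewise smooth (stability noted in Section 2), and that the series over $\gamma$ converges in the sense of generalized functions after the $\partial_I$-primitive has been taken — exactly the role of \eqref{primitive}. It would be cleanest to first prove the identity for $f$ a polynomial (or a finite exponential-polynomial), where every series is finite and all operators act honestly on $\C[V]$, and then extend to general smooth $f$ by a density/locality argument, using that both sides depend on $f$ only through finitely many derivatives on any compact set (the long-subsequence derivatives $\partial_Y f$ flagged in Section 1). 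The main obstacle I anticipate is the bookkeeping in step two: correctly tracking how the single factor $\frac{1-e^{-\partial_a}}{\partial_a}$ for $a\in X\setminus\s$ splits its numerator onto $f$ and its denominator onto the $\gamma$-exponential, so that the Box Spline index comes out as $(X\cap\s)\sqcup I$ rather than $X\cap\s$ or $X\setminus J$ — getting the indices and the sign $(-1)^{|I|}$ to match the statement exactly, uniformly in $\s$, is the delicate combinatorial heart of the argument.
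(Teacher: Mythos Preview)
Your overall architecture --- Poisson summation, stratifying $\Gamma\setminus\{0\}$ by the admissible subspace $\s$ such that $\gamma\in\Gamma_{\reg}(X/\s)$, and using $W(X/\s)$ as a $\partial_{X\setminus\s}$-primitive of the corresponding exponential sum --- is exactly the paper's strategy, and is correct.

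The gap is in your step two. Your account of how the operator $\partial_I\nabla_J$ and the Box Spline index $(X\cap\s)\sqcup I$ emerge does not work as written. Expanding $\prod_{a\in X\setminus\s}(1-e^{-\partial_a})$ over subsets gives translations, not $\partial_I\nabla_J$; the sentence ``the $e^{-\partial_a}$'s for $a\in I$ being absorbed as genuine translations, hence $\partial_I$ after summing the geometric-type series'' has no coherent meaning, and no geometric series is being summed. Trying to push the operator $\prod_a\frac{1-e^{-\partial_a}}{\partial_a}$ through the product $f(v)e^{2i\pi\langle v,\gamma\rangle}$ is the wrong organizing principle.

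The clean mechanism --- which you are circling but not finding --- is to pair with the test function \emph{before} applying Poisson. Set $q(w)=f(w)\int_V B(X)(v)\,test(w+v)$, so that $\langle B(X)*_d f,test\rangle=\sum_{\lambda}q(\lambda)$. Poisson gives $\sum_{\gamma}\hat q(\gamma)$; stratify $\gamma$ by $\s$; the primitive relation turns each stratum into $\int_V W(X/\s)(w)\,(-1)^{|X\setminus\s|}\partial_{X\setminus\s}q(w)\,dw$ via integration by parts. Now the $I,J$ split comes from the \emph{Leibniz rule} applied to $\partial_{X\setminus\s}q$, since $q$ is a product: $\partial_I$ lands on $f$, and $\partial_J$ lands on $\int_V B(X)(v)\,test(w+v)$. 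The Box Spline identity $\langle B(X),\partial_b h\rangle=-\langle B(X\setminus\{b\}),\nabla_{-b}h\rangle$ converts $\partial_J$ into $\nabla_{-J}$ acting on $test$ and reduces $B(X)$ to $B(X\setminus J)=B((X\cap\s)\sqcup I)$; finally the $\Lambda$-periodicity of $W(X/\s)$ lets you move $\nabla_{-J}$ from $test$ onto $f$ as $\nabla_J$. This produces the sign $(-1)^{|I|}$, the operator $\partial_I\nabla_J$ on $f$, and the correct Box Spline index in one stroke, with no formal inverse $\partial_a^{-1}$ ever needed.

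Two smaller points. Your proposed reduction to polynomial $f$ ``where every series is finite'' is mistaken: the series defining $W(X/\s)$ is over the infinite set $\Gamma_{\reg}(X/\s)$ regardless of $f$, and the paper's argument works directly for smooth $f$ without any density step. And the regularity check is immediate once the formula is established termwise in the distributional sense, since each $W(X/\s)$ is already known to be piecewise polynomial.
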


\begin{remark}
If $f$ is a polynomial, the term $B(X)*_cf$ is a polynomial density and all terms of the difference formula are locally polynomial distributions on $V$.
\end{remark}

Before proceeding,
let us comment on the proof. As in \cite{DM} (see also \cite{deboor}), we use the  Poisson formula to compute $B(X)*_df$. Then we group the  terms in the dual lattice $\Gamma$ in strata according to the hyperplane arrangement $\cup_{a\in X}\{a=0\}$. We then use the Bernoulli series as primitives of the corresponding sums.  This way, we introduce the needed derivatives of the function $f$.

\begin{proof}

Let $\CR$ be the set of admissible subspaces of $V$.
We have the disjoint decomposition:
\begin{equation}\label{decompU}
U=\bigsqcup_{\s\in \mathcal R} U_{\reg}(X/\s).
\end{equation}

Let $test$ be a test function on $V$. We compute
$$S:=\int_V (B(X)*_d f)(v)  test(v)=\sum_{\lambda\in \Lambda} f(\lambda) \int_V B(X)(v) test(\lambda+v).$$

We apply Poisson formula to the compactly supported smooth function $$q(w)=f(w)\int_V B(X)(v) test(w+v)$$
as our sum $S$ is equal to $\sum_{\lambda\in \Lambda}q(\lambda).$ We obtain $$S=\sum_{\gamma\in \Gamma} \int_V e^{2i\pi \ll w,\gamma\rr} q(w) dw.$$

The lattice $\Gamma$ is a disjoint union of the sets
 $\Gamma_{\reg}(X/\s)=\Gamma\cap U_{\reg}(X/\s)$ associated to  the admissible subspaces $\s$.
Remark that the set associated to $\s=V$ is $\{\gamma=0\}$. The term in $S$ corresponding to $\gamma=0$ is $\int_V q(w) dw$, that is  $\ll B(X)*_c f,test\rr $.

As in the generalized function sense
$$\sum_{\gamma\in  \Gamma_{\reg}(X/\s)}e^{2i\pi\ll w,\gamma\rr}=\partial_{X\setminus \s} W(X/\s)(w),$$
we obtain
$$S=\sum_{\s\in \mathcal R} \int_V W(X/\s)(w) (-1)^{|X\setminus \s|}\partial_{X\setminus \s}q(w) dw.$$
The function $q(w)$ is product of the two  smooth functions $f(w)$ and
$\int_V B(X)(v) test(w+v).$
 By Leibniz rule,
 $$S:=\sum_{\s}(-1)^{|X\setminus \s|}\sum_{I\sqcup J=X\setminus \s}\int_V \int_V
  W(X/\s)(w)\partial_If(w)  B(X)(v) \partial_Jtest(w+v)  dw.$$

 We first integrate in $v$ and use the equation satisfied by the Box Spline  $$\ll B(X),\partial_b h\rr =-\ll B(X\setminus \{b\}),\nabla_{-b} h\rr.$$
Thus we obtain
$$S= \sum_{\s}\sum_{ I\sqcup J =X\setminus \s}(-1)^{|I|}\int_V \int_V W(X/\s)(w) \partial_I f(w)   B(X\setminus J)(v)(\nabla_{-J} test)(w+v) dw .$$

  Let us integrate in $w$.  We use the invariance of the integral by $\nabla_b$ : $\int_V (\nabla_b f_1)(w) f_2(w) dw =\int_V f_1(w)(\nabla_{-b} f_2)(w) dw.$
  As $b\in J$ is  in $\Lambda$, and $W(X/\s)(w)$ is periodic, $$S=\sum_{\s\in \mathcal R}\sum_{ I\sqcup J =X\setminus \s}(-1)^{|I|}\int_V \int_V
 B(X\setminus J)(v) W(X/\s)(w) \nabla_J \partial_I f(w)  test(w+v) dw .$$

 Writing  $\mathcal R=\{V\}\sqcup \mathcal R'$, we obtain
     the formula of the theorem.

   \end{proof}

\bigskip

 On the space of polynomials, one has $$\nabla_J\partial_I f =(\prod_{b\in J} \frac{1-e^{-\partial_b}}{\partial_b})  \partial_{X\setminus \s}f$$ if $I\sqcup J=X\setminus \s$.

Recall that the space  $D(X)$  of Dahmen-Micchelli polynomials is the space of polynomials on $V$ such that $\partial_Y f=0$ for all long subsequences $Y$. In particular, if $\s$ is a proper subspace, the sequence  $X\setminus \s$ is a long subsequence. So if  $I,J$ are such that $I\sqcup J=X\setminus \s$ and $f\in D(X)$, then $\nabla_J\partial_I f=0$.

As a corollary of our formula, if $p\in D(X)$, we see that $B(X)*_dp=B(X)*_cp$.  Let us state more precisely this result of Dahmen-Micchelli  \cite{DM1} (see also  \cite{dp1}, chapter 16).

\begin{corollary}

If $p\in D(X)$, then   
$$P(v):=B(X)*_d p=\sum_\lambda p(\lambda) B(X)(v-\lambda)$$ is a polynomial function  on $V$, equal to $(\prod_{a\in X}\frac{1-e^{-\partial_a}}{\partial_a})p=B(X)*_c p.$

In this formula, we have identified $B(X)$, $B(X)*_d p$, and $B(X)*_cp$ to piecewise polynomial functions. 

\end{corollary}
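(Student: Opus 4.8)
The plan is to deduce the Corollary as a direct specialization of Theorem~\ref{theo:joli} to the case where $f = p$ is a Dahmen--Micchelli polynomial, and then to check that the classical "locally polynomial" interpretation of the three distributions involved makes sense. First I would recall the identity already displayed just before the statement: on the space of polynomials, if $I\sqcup J = X\setminus\s$ for a proper admissible subspace $\s$, then
$$\nabla_J\partial_I p = \left(\prod_{b\in J}\frac{1-e^{-\partial_b}}{\partial_b}\right)\partial_{X\setminus\s}\, p.$$
The key observation is that for $\s\in\CR'$ the subsequence $X\setminus\s$ is long (since $X\setminus(X\setminus\s)$ lies in $\s\neq V$ and so cannot generate $V$), hence by the defining property of $D(X)$ we have $\partial_{X\setminus\s}p = 0$, and therefore every term $\nabla_J\partial_I p$ on the right-hand side of the theorem's formula vanishes. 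Consequently the entire sum over $\s\in\CR'$ collapses to $0$, leaving $B(X)*_d p - B(X)*_c p = 0$, i.e. $B(X)*_d p = B(X)*_c p$ as distributions.

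Next I would unpack what this equality says concretely. By definition $B(X)*_d p = \sum_{\lambda\in\Lambda}p(\lambda)\,B(X)(v-\lambda)$ (this is the convolution of $B(X)$ with the discrete measure $\sum_\lambda p(\lambda)\delta_\lambda$, written out as a function via the piecewise-polynomial identification of $B(X)$); the sum is locally finite because $B(X)$ is supported on the compact zonotope $Z(X)$. On the other hand, by the formula $B(X)*_c f = \big((\prod_{a\in X}\frac{1-e^{-\partial_a}}{\partial_a})f\big)dv$ valid for polynomial $f$ (established in the first section using $I_a = (1-e^{-\partial_a})/\partial_a$ on $\C[V]$), the distribution $B(X)*_c p$ is given by the honest polynomial function $q := (\prod_{a\in X}\frac{1-e^{-\partial_a}}{\partial_a})p$ times $dv$. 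So the equality of distributions $B(X)*_d p = B(X)*_c p$ forces the locally $L^1$ function $P(v) = \sum_\lambda p(\lambda)B(X)(v-\lambda)$ to agree almost everywhere with the polynomial $q$; since $P$ is piecewise polynomial and continuous on $V_{\mathrm{reg,aff}}$ and agrees with a fixed polynomial on a dense open set, it equals that polynomial everywhere. This gives the displayed conclusion $P(v) = (\prod_{a\in X}\frac{1-e^{-\partial_a}}{\partial_a})p = B(X)*_c p$.

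I do not expect a serious obstacle here, since the Corollary is essentially a bookkeeping consequence of the main theorem; the only point requiring a little care is the final identification step, namely arguing that "equality as piecewise smooth distributions" upgrades to "equality of polynomial functions." The cleanest route is: the theorem's equality holds in the space of piecewise smooth distributions relative to $(X,\Lambda)$; both sides are then represented by locally $L^1$ functions, so they coincide as $L^1_{\mathrm{loc}}$ functions, hence pointwise on the dense open set $V_{\mathrm{reg,aff}}$; the right-hand side is globally a single polynomial there, so the piecewise-polynomial left-hand side $P$ is that same polynomial on each tope, and by continuity of $P$ across admissible hyperplanes (which holds because $B(X)$ itself is continuous when $N > n$, or more simply because $P$ equals one fixed polynomial on a dense set and is locally $L^1$) it is that polynomial everywhere. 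One should also note explicitly that the sum defining $P$ is locally finite, so $P$ is genuinely a well-defined function and the convolution $B(X)*_d p$ is unambiguous.
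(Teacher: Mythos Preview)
Your proposal is correct and follows essentially the same route as the paper: both deduce the corollary from Theorem~\ref{theo:joli} by observing that for every proper admissible subspace $\s$ the sequence $X\setminus\s$ is long, so $\partial_{X\setminus\s}p=0$ for $p\in D(X)$, whence every term $\nabla_J\partial_I p$ on the right of the theorem vanishes. Your additional care in upgrading the distributional equality $B(X)*_d p=B(X)*_c p$ to a pointwise polynomial identity is more explicit than the paper, which simply asserts the identification, but the underlying argument is the same.
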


\section{Vertices of the arrangement and semi-discrete convolutions.}
We now give a twisted version of Theorem \ref{theo:joli}, where we twist $f$ by an exponential function $e^{2i\pi\ll G,v\rr}$.

The set of characters on $\Lambda$ is the torus $T:=U/\Gamma$.
 If $g\in T$, we denote by $g^{\lambda}$ the corresponding character on $\Lambda$. More precisely if $g$ has representative $G\in U$, then by definition $g^{\lambda}=e^{2i\pi \ll G,\lambda\rr}$.  
Define $$X(g):=\{a\in X; g^{a}=1\}.$$

If $g\in T=U/\Gamma$ has representative $G\in U$, we denote by $g+\Gamma$ the set $G+\Gamma$.

For $a\in X$, introduce the operator $$(\nabla(a,g)f)(v)=f(v)-g^{-a}f(v-a).$$
If $Y$ is a subsequence of $X$, define
$$\nabla_Y^g=\prod_{a\in Y} \nabla(a,g).$$

We  introduce a subset $\mathcal R(g)$ of admissible subspaces, depending on $g$.

\begin{definition}

The admissible space $\s$ is in $\mathcal R(g)$ if the space $(g+\Gamma)\cap \s^{\perp}$
is non empty
\end{definition}

Remark that if $G$ is not in $\Gamma$, then
$V$ is not in the set $\CR(g)$.

\begin{remark}
If $\s\in  \mathcal R(g)$, then all elements of $X\cap \s$ are in $X(g)$. Thus $\mathcal R(g)$ is contained in the set of admissible spaces for $X(g)$. However  the converse does not hold: take  $V=\R \omega$ $X=[2\omega]$,   $\Lambda= \Z \omega$, and $G=\frac{1}{2}\omega^*$. Then $X(g)=X$, so that $V$ is an admissible subspace for $X(g)$. However, $V$ is not in $\CR(g)$.
\end{remark}

If $\s\in \mathcal R(g)$, take
 $g_\s\in (g+\Gamma)\cap \s^{\perp}$.
  Then $(g+\Gamma)\cap \s^{\perp}$ is the translate by $g_\s$ of the lattice $\Gamma\cap \s^{\perp}$.

Define  $$\Gamma_{\reg}(X/\s,g)=(g+\Gamma)\cap U_{\reg}(X/\s)^{\perp}.$$
Thus  $\Gamma_{\reg}(X/\s,g)$ consists  of elements $\xi\in g+\Gamma$ such that
$$\ll \xi,s\rr=0 \,\,\,\,{\rm  for\, all\,}\, s\in \s; \hspace{0.5cm}
\ll \xi,a\rr\neq 0 \,\, \,{\rm  for\, all}\, a\in X\setminus \s.$$

The following series
\begin{equation} {W}(X/\s,g)(v)=\sum_{\xi \in \Gamma_{\reg}(X/\s,g)}  \frac{e^{2i\pi \ll v ,\xi \rr}}{
\prod_{a \in X} 2i\pi \ll a,\xi \rr}
\end{equation}
 is well defined as a generalized function on $V$.

The function   ${W}(X/\s,g)(v)$ is not periodic with respect to $\Lambda$. We have instead the covariance formula

\begin{equation}\label{eq:cov}
W(X/\s,g)(v-\lambda)=g^{-\lambda} W(X/\s,g)(v).
\end{equation}

In the sense of generalized functions,  we have
\begin{equation}\label{primitive}
\partial_{X\setminus \s} W(X/\s,g)(v)=\sum_{\xi\in \Gamma_{\reg}(X/\s,g)} e^{2i\pi\ll \xi,v\rr}.
\end{equation}

We recall the following proposition \cite{sze1} (see also \cite{brver2},\cite{BV}).

\begin{proposition}
The generalized function $W(X/\s,g)$ is  a piecewise   polynomial (relative  to $(X,\Lambda)$) function on $V$.
\end{proposition}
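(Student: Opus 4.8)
The plan is to mimic exactly the reduction-to-one-variable argument sketched in the paper for the non-twisted case, Proposition on $W(X)$. First I would reduce to an explicit basis: pick a subsequence $a_{j_1},\dots,a_{j_n}$ of $X\setminus\s$ whose images form a basis of $V$ (possible since $X$, hence $X\setminus\s$, generates $V$ when $\s=\{0\}$; in general work modulo $\s$ so that $X/\s$ generates $V/\s$, and it suffices to treat $\s=\{0\}$ after the obvious identifications). Using this basis one writes a partial fraction decomposition of the rational function $z\mapsto \prod_{a\in X}\frac{1}{\ll a,z\rr}$ as a sum of terms $\prod_{i=1}^n \ll a_{j_i},z\rr^{-n_i}$, with $\sum n_i=|X|$. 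Summing over $\xi\in g+\Gamma$ subject to the regularity constraints $\ll\xi,a\rr\neq 0$ then decomposes $W(X/\s,g)$ into a finite sum of products of one-variable ``shifted'' Bernoulli-type series, each of the form $\sum_{n\neq c}\frac{e^{2i\pi n t}}{(2i\pi n)^k}$ with $c$ a rational shift coming from $g$. Each such one-variable series is, on the unit interval, a polynomial (a translated Bernoulli polynomial), since it is a classical Fourier series of a piecewise polynomial periodic function; hence each product term is locally a polynomial, and so is the finite sum.

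The key steps, in order, are: (1) reduce the general $\s$ to $\s=\{0\}$ by the quotient construction $V\to V/\s$, using that $W(X/\s,g)$ is by definition pulled back from $V/\s$ and that the lattice maps to a lattice; (2) choose an adapted basis extracted from $X$ and perform the partial fraction expansion of $\prod_{a\in X}\ll a,z\rr^{-1}$ relative to that basis; (3) substitute $z=2i\pi\xi$, interchange the (finite) partial-fraction sum with the sum over $\xi\in\Gamma_{\reg}(X/\s,g)$, and recognize each resulting one-dimensional series as a shifted Bernoulli series $t\mapsto\sum_{n\in\Z,\,n\neq -\ll\xi_0,a\rr-\text{integer}}\frac{e^{2i\pi n t}}{(2i\pi n)^k}$ attached to a line $\R a_{j_i}$; (4) invoke the one-variable fact that such a series is locally $L^1$ and piecewise polynomial — indeed a Bernoulli polynomial shifted by the fractional part of a linear form — so the product, and then the finite sum, is piecewise polynomial relative to the arrangement of admissible hyperplanes; (5) check that the ``breakpoints'' are exactly the translates by $\Lambda$ of the admissible hyperplanes of $(X,\Lambda)$, so the piecewise-polynomial structure is relative to $(X,\Lambda)$ and not to a finer arrangement. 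One also records the covariance formula \eqref{eq:cov}, which is immediate from replacing $v$ by $v-\lambda$ inside $e^{2i\pi\ll v,\xi\rr}$ and using $\xi\in g+\Gamma$ so that $e^{-2i\pi\ll\lambda,\xi\rr}=g^{-\lambda}$; this ensures the local polynomials on different topes are consistently determined.

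The main obstacle I anticipate is bookkeeping the rational shifts introduced by $g$: in the untwisted case the one-variable series are exactly the classical Bernoulli series summed over $n\neq 0$, whereas here the constraint $\ll\xi,a\rr\neq 0$ with $\xi\in g+\Gamma$ excludes a value $n=c_a$ that is in general a nonzero rational, and the summation index is a coset rather than all of $\Z$. One must verify that $\sum_{n\in c+\Z,\,n\neq 0}\frac{e^{2i\pi n t}}{(2i\pi n)^k}$ is still piecewise polynomial in $t$ (it is: it equals a translate of a Bernoulli polynomial times $e^{2i\pi c' t}$ for an appropriate representative, up to subtracting the single excluded exponential term, which is entire), and that after multiplying the $n$ pieces together and recombining via the partial fractions, all the exponential factors $e^{2i\pi(\cdot)t}$ cancel to leave genuine polynomials on each tope. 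This cancellation is forced because the left-hand side $W(X/\s,g)$ transforms under $\Lambda$ only by the scalar $g^{-\lambda}$, so after factoring out one overall exponential the remainder must be $\Lambda$-periodic, hence truly piecewise polynomial; making that argument precise is the only delicate point. Everything else is the verbatim one-dimensional computation already invoked in the text for $W(X)$, now carried out coset by coset. The cleanest writeup is probably: state that the proof is identical to that of the untwisted proposition, replacing throughout the lattice $\Gamma$ by the coset $g+\Gamma$ and the Bernoulli polynomials by their shifts $B(k,t-c)$, and refer to \cite{sze1} for the multidimensional residue formula that organizes the computation.
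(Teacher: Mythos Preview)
Your plan is exactly the paper's: its entire proof of this proposition is the single sentence ``It is proven similarly by reduction to one variable,'' pointing back to the partial-fraction argument you outline and to \cite{sze1}, \cite{brver2}, \cite{BV}. One small correction: the ``delicate point'' you flag about lingering factors $e^{2i\pi c' t}$ and a needed cancellation does not actually arise, because the one-variable twisted series is \emph{already} piecewise polynomial in $t$ --- the paper's own example $W(X_1,z\omega^*)(t\omega)=e^{2i\pi[t]z}/(1-e^{-2i\pi z})$ is piecewise constant in $t$ with no exponential-in-$t$ factor --- so your step~(4) goes through directly and your suboptimal factorization (pulling out $e^{2i\pi c t}$) should simply be dropped.
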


It is proven similarly by reduction to one variable.

\begin{example}
Let $V=\R \omega$,  $\Lambda=\Z \omega$ and $X_k:=[\omega,\omega,\ldots, \omega]$,
where  $\omega$ is repeated $k$ times with $k>0$.
Then $\Gamma=\Z \omega^*$. 
 Then if $z$ is not an integer
$$W(X_k,z\omega^*)(t\omega)=\sum_{n\in \Z} \frac{e^{2i\pi (n+z)t}}{
(2i\pi (n+z))^k}.$$

We have, for example, (see \cite{brver2})
$$W(X_1,z\omega^*)(t\omega)=\frac{e^{2i\pi [t] z}}{1-e^{-2i\pi z}}.$$
$$W(X_2,z\omega^*)(t\omega)=e^{2i\pi [t]z}(\frac{t-[t]}{1-e^{-2i\pi z}}+\frac{1}{(1-e^{-2i\pi z})(1-e^{2i\pi z})}).$$

Here $[t]$ is the integral part of $t$. This function $[t]$ is a constant on each interval $]\ell,\ell+1[$, and $W(X_k,z\omega^*)$ is a locally polynomial function of $t$.
\end{example}

\begin{theorem}\label{theo:joli2}
Let $G\in U$, and $g$ its image in $U/\Gamma.$ Let $f(v)=e^{2i\pi  \ll v,G\rr}h(v)$, where $h$ is a smooth function.
Then
$$B(X)*_d f=\sum_{\s\in \mathcal R(g)} \sum_{I\subset X\setminus \s}  (-1)^{|I|}  B((X\cap \s)\sqcup I)*_c (W(X/\s,g) \partial_I \nabla_{J}^g h).$$

In this formula, $J$  is the complement of $I$ in
$X\setminus \s$.

\end{theorem}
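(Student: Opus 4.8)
The plan is to re-run the proof of Theorem~\ref{theo:joli} almost verbatim, the only genuinely new ingredient being the covariance relation~\eqref{eq:cov} in place of the $\Lambda$-periodicity of $W(X/\s)$. Fix a test function $test$ on $V$ and set $p(w)=h(w)\int_V B(X)(v)\,test(w+v)$, so that, writing $f(v)=e^{2i\pi\ll v,G\rr}h(v)$, the compactly supported smooth function $q(w):=f(w)\int_V B(X)(v)\,test(w+v)$ equals $e^{2i\pi\ll w,G\rr}p(w)$. Poisson summation applied to $q$ gives $\ll B(X)*_d f,test\rr=\sum_{\lambda\in\Lambda}q(\lambda)=\sum_{\gamma\in\Gamma}\int_V e^{2i\pi\ll w,\gamma\rr}q(w)\,dw$, which, since $q=e^{2i\pi\ll\cdot,G\rr}p$, equals $\sum_{\xi\in g+\Gamma}\int_V e^{2i\pi\ll w,\xi\rr}p(w)\,dw$: the twist by $G$ has simply shifted the Fourier side from $\Gamma$ to $g+\Gamma=G+\Gamma$.

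Next I would stratify $g+\Gamma$ by the hyperplane arrangement $\bigcup_{a\in X}\{\ll a,\cdot\rr=0\}$: to $\xi\in g+\Gamma$ attach the $X$-admissible subspace $\s(\xi)$ spanned by $\{a\in X:\ll a,\xi\rr=0\}$; then $\xi\in\Gamma_{\reg}(X/\s(\xi),g)$, and this sorting realizes $g+\Gamma$ as the disjoint union $\bigsqcup_{\s}\Gamma_{\reg}(X/\s,g)$ over admissible $\s$, a stratum being non-empty exactly when $(g+\Gamma)\cap\s^{\perp}\neq\emptyset$, i.e.\ for $\s\in\CR(g)$. Using the $g$-twisted version of~\eqref{primitive}, namely $\sum_{\xi\in\Gamma_{\reg}(X/\s,g)}e^{2i\pi\ll w,\xi\rr}=\partial_{X\setminus\s}W(X/\s,g)(w)$ as generalized functions, and integrating by parts in $w$ (legitimate since $W(X/\s,g)$ is locally $L^1$ by the recalled proposition and $test$ is compactly supported), I obtain $\ll B(X)*_d f,test\rr=\sum_{\s\in\CR(g)}(-1)^{|X\setminus\s|}\int_V W(X/\s,g)(w)\,\partial_{X\setminus\s}p(w)\,dw$. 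Leibniz' rule applied to $p=h\cdot\bigl(\int_V B(X)(v)\,test(\cdot+v)\bigr)$ expands $\partial_{X\setminus\s}p$ as $\sum_{I\sqcup J=X\setminus\s}\partial_I h\cdot\partial_J\bigl(\int_V B(X)(v)\,test(\cdot+v)\bigr)$.

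From here the computation follows that of Theorem~\ref{theo:joli} line by line, with a single modification. First integrate in $v$, moving $\partial_J$ onto $B(X)$ by $\ll B(X),\partial_b test\rr=-\ll B(X\setminus\{b\}),\nabla_{-b}test\rr$; this replaces $B(X),\partial_J test$ by $(-1)^{|J|}B(X\setminus J),\nabla_{-J}test$ and makes the overall sign $(-1)^{|X\setminus\s|+|J|}=(-1)^{|I|}$. Then integrate in $w$, moving $\nabla_{-J}$ off $test$ via $\int_V(\nabla_{-b}\phi)(w)\psi(w)\,dw=\int_V\phi(w)(\nabla_{b}\psi)(w)\,dw$. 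The new point is that $\psi(w)=W(X/\s,g)(w)\,\partial_I h(w)$ is no longer $\Lambda$-periodic: for $b\in J\subset\Lambda$ the covariance~\eqref{eq:cov} gives $W(X/\s,g)(w-b)=g^{-b}W(X/\s,g)(w)$, whence $\nabla_b\bigl(W(X/\s,g)\,\partial_I h\bigr)(w)=W(X/\s,g)(w)\bigl(\partial_I h(w)-g^{-b}\partial_I h(w-b)\bigr)=W(X/\s,g)(w)\,(\nabla(b,g)\partial_I h)(w)$. Iterating over $b\in J$ (all these operators commute) turns $\nabla_J$ into $\nabla_J^g$, so the $w$-integral becomes $\int_V\int_V B(X\setminus J)(v)\,W(X/\s,g)(w)\,(\partial_I\nabla_J^g h)(w)\,test(w+v)\,dw\,dv$. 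Since $X\setminus J=(X\cap\s)\sqcup I$, this double integral is $\ll B((X\cap\s)\sqcup I)*_c(W(X/\s,g)\,\partial_I\nabla_J^g h),test\rr$, and summing over $\s\in\CR(g)$ and $I\subset X\setminus\s$ with the sign $(-1)^{|I|}$ gives the asserted identity in the space of piecewise smooth distributions.

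I expect the only real obstacle to be bookkeeping: verifying that the sorting of $g+\Gamma$ is genuinely a disjoint decomposition indexed precisely by $\CR(g)$, and keeping track of the covariance factor $g^{-b}$ so that $\nabla_J$ is converted into exactly $\nabla_J^g$ rather than some other twisted difference. As a consistency check, when $G\in\Gamma$ one has $g=0$, $\CR(g)=\CR$, $\nabla_J^g=\nabla_J$, $W(X/\s,g)=W(X/\s)$, and $f=h$ on $\Lambda$; the $\s=V$ term then contributes $B(X)*_c h$, and Theorem~\ref{theo:joli2} reduces to Theorem~\ref{theo:joli} applied to $h$.
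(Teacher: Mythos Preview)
Your proposal is correct and follows essentially the same route as the paper's own proof: Poisson summation shifts the sum to $g+\Gamma$, stratification by $\CR(g)$ brings in the primitives $W(X/\s,g)$, Leibniz and the Box Spline identity $\ll B(X),\partial_b h\rr=-\ll B(X\setminus\{b\}),\nabla_{-b}h\rr$ redistribute the derivatives, and the covariance~\eqref{eq:cov} converts $\nabla_J$ into $\nabla_J^g$. Your write-up is in fact more explicit than the paper's on the bookkeeping (the disjointness of the stratification and the computation $\nabla_b\bigl(W(X/\s,g)\,\partial_I h\bigr)=W(X/\s,g)\,\nabla(b,g)\partial_I h$), so no gap remains.
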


\begin{remark}

If $G\in \Gamma$, then $B(X)*_d f=B(X)*_d h$, and the formula of the theorem above coincide with the formula of Theorem \ref{theo:joli} for $h$:  the set $\mathcal R(g)$ coincide with the set $\mathcal R$, and the term  corresponding to $V$ in the formula of  Theorem \ref{theo:joli2}  is $B(X)*_c h$.

\end{remark}
 \begin{proof}
We proceed in the same way than the proof of Theorem \ref{theo:joli}.
Let $test$ be a test function on $V$. We compute
$S:=\int_V (B(X)*_d f)(v)  test(v)$ by Poisson formula.
If  $$q(w)=h(w)\int_V B(X)(v) test(w+v),$$ we obtain
$$S=\sum_{\gamma\in \Gamma} \int_V e^{2i\pi \ll w,\gamma\rr} e^{2i\pi \ll w,G\rr}q(w) dw.$$
Thus
 $$S=\sum_{\xi\in (g+\Gamma)} \int_V e^{2i\pi \ll w,\xi\rr} q(w) dw.$$

The set $g+\Gamma$ is a disjoint union over $\s\in \mathcal R(g)$ of the sets
 $\Gamma_{\reg}(X/\s,g)=(g+\Gamma)\cap U_{\reg}(X/\s)$, so
$$S=\sum_{\s\in \mathcal R(g)} \int_V W(X/\s,g)(w) (-1)^{|X\setminus \s|}\partial_{X\setminus \s}q(w) dw.$$
Then, using Leibniz rule  for $\partial_a$ , and equation for the Box Spline, we obtain  that $S$ is equal to
$$\sum_{\s\in \mathcal R(g)}\sum_{ I\sqcup J =X\setminus \s}(-1)^{|I|}\int_V \int_V W(X\setminus \s,g)(w) \partial_I f(w)  B(X\setminus J)(v) (\nabla_{-J}test)(w+v) dw .$$

Using the covariance formula  (\ref{eq:cov}) for $W(X\setminus\s,g)$,  we see that
 $$\int_V  W(X\setminus \s,g)(w) f_1(w) (\nabla_{-b}f_2)(w) dw =\int_V  W(X\setminus \s,g)(w) (\nabla(b,g)f_1)(w) f_2(w) dw $$
and  we obtain
the formula of the theorem.
   \end{proof}

\bigskip

Let us
 point out a corollary of this formula.

\begin{definition}
  We say that a point $g\in U/\Gamma$ is a toric vertex of the  arrangement $X$, if $X(g)$  generates $V$.
                  We denote by $\mathcal V(X)$ the set of toric vertices of the arrangement $X$.
  \end{definition}

   If $g$ is a vertex,  there is a basis $\sigma$ of $V$ extracted from $X$ such that $g^{a}=1$, for all $a\in \sigma$. We thus see that   the set $\mathcal V(X)$ is finite.

   If $X$ is unimodular, then $\mathcal V(X)$ is reduced to $g=0$.

  \begin{corollary}(Dahmen-Micchelli)

  Let $g\in \mathcal V(X)$ be a toric vertex of the arrangement $X$ and let $p\in D(X(g))$ be  a polynomial in the Dahmen-Micchelli space for $X(g)$.
  Assume that $g\neq 0$.
   Let $f(\lambda)=g^{\lambda}p(\lambda)$.
  Then  $B(X)*_d f=0$
  \end{corollary}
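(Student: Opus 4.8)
The plan is to apply Theorem \ref{theo:joli2} to $f(v) = e^{2i\pi\ll v,G\rr}p(v)$, taking $h = p$ (a polynomial), and then to check that every term on the right-hand side vanishes. Since $g\neq 0$, i.e. $G\notin\Gamma$, the full space $V$ is not in $\mathcal R(g)$, so the theorem gives
$$B(X)*_d f \;=\; \sum_{\s\in\mathcal R(g)}\sum_{I\subset X\setminus\s}(-1)^{|I|}\,B((X\cap\s)\sqcup I)*_c\bigl(W(X/\s,g)\,\partial_I\nabla_J^g p\bigr),$$
where every admissible subspace $\s$ occurring is proper and $J = (X\setminus\s)\setminus I$. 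It therefore suffices to prove that $\partial_I\nabla_J^g p = 0$ for each such $\s$ and each splitting $I\sqcup J = X\setminus\s$.

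Fix $\s$ and such a splitting. Split $I = I_1\sqcup I_2$ and $J = J_1\sqcup J_2$ according to membership in $X(g)$, so that $I_1\sqcup J_1 = (X\setminus\s)\cap X(g) = X(g)\setminus\s$, while $I_2$ and $J_2$ consist of vectors $a$ with $g^a\neq 1$. On the space $\C[V]$ of polynomials the factors of $\nabla_J^g$ split conveniently: for $b\in J_1\subset X(g)$ we have $g^{-b}=1$, so $\nabla(b,g) = 1-e^{-\partial_b} = \partial_b\,I_b$ with $I_b = \frac{1-e^{-\partial_b}}{\partial_b}$ invertible on $\C[V]$; for $b\in J_2$ the operator $\nabla(b,g) = 1 - g^{-b}e^{-\partial_b}$ is a power series in $\partial_b$ whose constant term $1-g^{-b}$ is nonzero, hence is invertible on $\C[V]$. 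Since $\partial_I$ and all these factors are constant-coefficient operators in the $\partial_a$, they commute, and therefore
$$\partial_I\nabla_J^g p \;=\; \partial_{I_2}\Bigl(\prod_{b\in J_1}I_b\Bigr)\Bigl(\prod_{b\in J_2}\nabla(b,g)\Bigr)\,\partial_{I_1}\partial_{J_1}\,p \;=\; \partial_{I_2}\Bigl(\prod_{b\in J_1}I_b\Bigr)\Bigl(\prod_{b\in J_2}\nabla(b,g)\Bigr)\,\partial_{X(g)\setminus\s}\,p .$$

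Finally, $X(g)\setminus\s$ is a nonempty long subsequence of $X(g)$: it is nonempty because $X(g)$ generates $V$ while $\s\neq V$, and it is long (for $X(g)$) because $X(g)\setminus(X(g)\setminus\s) = X(g)\cap\s$ is contained in the proper subspace $\s$ and hence does not generate $V$. As $p\in D(X(g))$, this gives $\partial_{X(g)\setminus\s}p = 0$, so $\partial_I\nabla_J^g p = 0$ and the corresponding summand $B((X\cap\s)\sqcup I)*_c(W(X/\s,g)\cdot 0)$ vanishes; hence $B(X)*_d f = 0$. I expect the only real bookkeeping subtlety to be handling the twisted difference operators: one must notice that $\nabla(b,g)$ contributes a genuine derivative $\partial_b$ exactly when $b\in X(g)$, and an invertible operator otherwise, so that after commuting everything the long-subsequence derivative $\partial_{X(g)\setminus\s}$ ends up applied directly to $p$ and annihilates it. (The hypothesis $g\neq 0$ is essential: for $g=0$ one has $X(g)=X$, $f=p$, and $B(X)*_d p = B(X)*_c p$ need not vanish.)
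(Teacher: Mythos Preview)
Your proof is correct and follows essentially the same route as the paper: apply Theorem \ref{theo:joli2}, use $g\neq 0$ to rule out $\s=V$, split $I\sqcup J=X\setminus\s$ into the parts lying in $X(g)$ and its complement, and observe that the $X(g)$-part $I_1\sqcup J_1=X(g)\setminus\s$ is long for $X(g)$, so $p\in D(X(g))$ is killed. You spell out more explicitly than the paper why $\nabla(b,g)$ contributes a genuine $\partial_b$ exactly when $b\in X(g)$ (and an invertible operator otherwise), which makes the isolation of the factor $\partial_{X(g)\setminus\s}p$ transparent; this extra invertibility remark for $b\notin X(g)$ is harmless but not actually needed, since it already suffices that $\partial_{I_1}\nabla_{J_1}p=0$ appears as a factor.
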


  \begin{proof}
  We apply the formula of Theorem \ref{theo:joli2} with $h=p$. As $g\neq 0$, all terms $\s\in \mathcal R(g)$ are proper subspaces of $V$.
  Let us show that all the terms  in our formula are $0$.
  Indeed let $I\sqcup J=X\setminus \s$.
                      Let $I'=I\cap X(g)$ and $J'=J\cap X(g)$.
                      Then $I'\sqcup J'=X(g)\setminus \s$ is a long subset of $X(g)$. As $\nabla_{I'}^g=\nabla_{I'}$, we see that
                      $\partial_{I'}\nabla_{J'}^p$ is already equal to $0$.

  \end{proof}

I wish to thank Michel Duflo for comments on this text.

\vspace{5cm}

\thanks{{\bf Mich{\`e}le Vergne}, Institut de Math\'ematiques de Jussieu, Th{\'e}orie des
Groupes, Case 7012, 2 Place Jussieu, 75251 Paris Cedex 05, France;
 }

\thanks{email: vergne@math.jussieu.fr}

\end{document}